\newtheorem{theorem}{Theorem}
\newtheorem{remark}{Remark}
\newtheorem{lemma}{Lemma}
\def\re{\mathbb{R}}
\def\eps{\varepsilon}
\def\pd{\partial}
\def\ol{\overline}
\def\la{\lambda}
\def\({\left(}
\def\){\right)}
\def\pd{\partial}
\def\|{\Vert}
\begin{document}
\title[One-dimensional boundary blow up problem]{One-dimensional boundary blow up problem with a nonlocal term}

\author{Taketo Inaba}

\address{
Department of Mathematics, Osaka Metropolitan University \\
3-3-138, Sumiyoshi-ku, Sugimoto-cho, Osaka, Japan \\
}

\email{sf22812v@st.omu.ac.jp \\}

\author{Futoshi Takahashi}

\address{
Department of Mathematics, Osaka Metropolitan University \\
3-3-138, Sumiyoshi-ku, Sugimoto-cho, Osaka, Japan \\
}

\email{futoshi@omu.ac.jp \\}

\begin{abstract}
In this paper, we study a nonlocal boundary blow up problem on an interval and obtain the precise asymptotic formula for solutions 
when the bifurcation parameter in the problem is large.
\end{abstract}

\subjclass[2020]{Primary 34C23; Secondary 37G99.}

\keywords{Nonlocal elliptic equations, Boundary blow up solutions.}
\date{\today}

\dedicatory{}

\maketitle

\section{Introduction}

In this paper, we study the following one-dimensional nonlocal elliptic problem 
\begin{equation}
\label{BBP_M}
	\begin{cases}
	M(\| u \|_{L^q(I)}) u''(x) = \la u^p(x), \quad x \in I = (-1,1), \\
	u(x) > 0, \quad x \in I, \\
	u(x) \to +\infty  \quad \text{as} \ x \to \pd I = \{-1, +1\},
	\end{cases}
\end{equation}
where $\la > 0$ is a given constant and $M: [0,+\infty) \to \re_+$ is a continuous function. 
We call $u$ a solution of \eqref{BBP_M} if $u \in C^2(I) \cap L^q(I)$ and $u$ solves the equation for all $x \in I$.
Throughout of the paper, we assume that
\begin{equation}
\label{Assumption}
	p > 1 \quad \text{and} \quad 0 < q < \frac{p-1}{2}.
\end{equation}
We consider $\la > 0$ in \eqref{BBP_M} as a bifurcation parameter;
the number of solutions of \eqref{BBP_M} is changed according to the value of $\la$.
In most cases we consider $M(t) = (t^q + b)^r$ $(t \ge 0)$ for constants $r>0$ and $b \ge 0$, 
i.e., we consider
\begin{equation}
\label{BBP}
	\begin{cases}
	\(\| u \|_{L^q(I)}^q + b \)^r u''(x) = \la u^p(x), \quad x \in I = (-1,1), \\
	u(x) > 0, \quad x \in I, \\
	u(x) \to +\infty  \quad \text{as} \ x \to \pd I =  \{-1, +1\}.
	\end{cases}
\end{equation}

For any $p > 1$, it is a classical fact that there exists a unique solution $U_p \in C^2(I)$ satisfying
\begin{equation}
\label{Eq:U_p}
	\begin{cases}
	U_p''(x) = U_p^p(x), \quad x \in I = (-1,1), \\
	U_p(x) > 0, \quad x \in I, \\
	U_p(x) \to +\infty  \quad \text{as} \ x \to \pd I =  \{-1, +1\},
	\end{cases}
\end{equation}
see Proposition 1.8 and Remark 1.10 in \cite{DDGR} (for example).
Note that the nonlinearity $f(s) = s^p$ $(p > 1)$ satisfies the famous Keller-Osserman condition \cite{Keller}, \cite{Osserman}
\[
	\int_a^{\infty} \frac{1}{\sqrt{F(t)}} dt < +\infty \quad \text{for some} \quad a > 0,
\]
where $F(t) = \int_0^t f(s) ds$. 
By the uniqueness, we further see that $U_p$ is even: $U_p(-x) = U_p(x)$ for any $x \in I$;
otherwise $V_p(x) = U_p(-x)$ will be another solution of \eqref{Eq:U_p} different from $U_p$.
Later in Lemma \ref{Lemma:U_p}, we prove that $U_p \in L^q(I)$ for $0 < q < \frac{p-1}{2}$ and we compute the exact value of $\| U_p \|_{L^q(I)}$.

In the following, we put 
\begin{equation}
\label{Def:L_p}
	L_p = \int_{1}^{\infty}\frac{dt}{\sqrt{t^{p+1}-1}}
\end{equation}
for $p > 1$. 
Also $B(x,y) = \int_0^1 t^{x-1}(1-t)^{y-1} dt$ denotes the Beta function.

First, we consider the case $b=0$ in \eqref{BBP}.

\begin{theorem}
\label{Thm:b=0}
Assume \eqref{Assumption} and let $r > 0$ and $b = 0$ in \eqref{BBP}.
Denote $B_{p,q} =  B\(\frac{p-2q-1}{2(p+1)}, \frac{1}{2}\)$.

\begin{enumerate}
\item[(i)] Assume $qr-p+1 \ne 0$.
Then for any $\la > 0$, there exists a unique solution $u_{\la} \in C^2(I) \cap L^q(I)$ of \eqref{BBP} which satisfies 
\begin{align}
\label{Formula:b=0}
	&u_{\la}(x) = \la^{\frac{1}{qr-p+1}} \| U_p \|_q^{-\frac{qr}{qr-p+1}} U_p(x) \notag \\
	&= \la^{\frac{1}{qr-p+1}} \( \frac{p+1}{2} \)^{\frac{r(p-1-q)}{(p-1)(qr-p+1)}} L_p^{-\frac{r(2q-p+1)}{(p-1)(qr-p+1)}} B_{p,q}^{-\frac{r}{qr-p+1}} U_p(x)
\end{align}
for $x \in I$, where $U_p$ is the unique solution of \eqref{Eq:U_p}. 

\item[(ii)] Assume $qr-p+1= 0$.
Then \eqref{BBP} admits a solution if and only if $\la =  \| U_p \|_{L_q(I)}^{qr} = \( \frac{p+1}{2} \)^{-\frac{r(p-1-q)}{p-1}} L_p^{\frac{r(2q-p+1)}{p-1}} B_{p,q}^r$.
In this case, $u_{\la} \Big |_{\la = \| U_p \|_q^{qr}} = U_p$. 
\end{enumerate}
\end{theorem}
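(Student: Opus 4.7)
The plan is to reduce \eqref{BBP} with $b = 0$ to the classical autonomous equation \eqref{Eq:U_p} by means of a scaling ansatz. Since the coefficient $\|u\|_{L^q(I)}^{qr}$ on the left-hand side of \eqref{BBP} is simply a positive constant once $u$ is fixed, it is natural to look for solutions of the form $u(x) = c\,U_p(x)$, with $c > 0$ a constant to be determined, where $U_p$ is the unique solution of \eqref{Eq:U_p}.

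Plugging $u = cU_p$ into \eqref{BBP} with $b = 0$, using $U_p'' = U_p^p$, and invoking Lemma \ref{Lemma:U_p} (which guarantees $U_p \in L^q(I)$ under \eqref{Assumption} and supplies the explicit value of $\|U_p\|_{L^q(I)}$), the equation reduces to the single algebraic condition
$$c^{qr+1-p}\,\|U_p\|_{L^q(I)}^{qr} = \lambda.$$
When $qr - p + 1 \ne 0$ (case (i)), this uniquely determines $c = \lambda^{1/(qr-p+1)}\|U_p\|_{L^q(I)}^{-qr/(qr-p+1)}$, which is the first line of \eqref{Formula:b=0}; substituting the explicit expression for $\|U_p\|_{L^q(I)}$ from Lemma \ref{Lemma:U_p} then yields the second line. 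When $qr - p + 1 = 0$ (case (ii)), the exponent of $c$ vanishes, so the condition becomes the necessary and sufficient constraint $\lambda = \|U_p\|_{L^q(I)}^{qr}$; in that case $c = 1$, i.e., $u = U_p$, is a solution.

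For uniqueness in case (i), I would take any solution $u$ of \eqref{BBP}, set $K = \|u\|_{L^q(I)} > 0$, and define $v = \alpha u$ with $\alpha^{p-1} = \lambda K^{-qr}$. A direct computation shows that $v$ solves \eqref{Eq:U_p}, so by the uniqueness of $U_p$ (Proposition 1.8 of \cite{DDGR}) one has $v = U_p$, i.e., $u = U_p/\alpha$. Taking the $L^q$ norm gives the relation $\alpha K = \|U_p\|_{L^q(I)}$, which, combined with $\alpha^{p-1} = \lambda K^{-qr}$, yields $K^{qr-p+1} = \lambda \|U_p\|_{L^q(I)}^{-(p-1)}$. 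Under the assumption $qr - p + 1 \ne 0$, this uniquely determines $K$, hence $\alpha$, hence $u$.

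I do not expect any serious obstacle: the entire argument is a scaling reduction to the already-solved autonomous problem \eqref{Eq:U_p} together with standard manipulation of exponents. The only slightly delicate point is the bookkeeping needed to rewrite $\|U_p\|_{L^q(I)}^{-qr/(qr-p+1)}$ in terms of $(p+1)/2$, $L_p$, and $B_{p,q}$, but this is purely mechanical once the explicit value of $\|U_p\|_{L^q(I)}$ provided by Lemma \ref{Lemma:U_p} is in hand.
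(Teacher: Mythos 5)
Your argument is correct and is essentially the paper's own proof: the paper's Lemma \ref{Lemma:M} performs exactly your scaling reduction $v=\alpha u$ to the autonomous problem \eqref{Eq:U_p} (for a general coefficient $M$), after which Theorem \ref{Thm:b=0} amounts to solving the same algebraic relation $K^{qr-p+1}=\la\|U_p\|_{L^q(I)}^{-(p-1)}$ that you derive. The only difference is organizational — you specialize directly to $M(t)=t^{qr}$ rather than invoking the general lemma — and the remaining exponent bookkeeping via Lemma \ref{Lemma:U_p} is the same in both.
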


Next, we consider the case $b>0$ in \eqref{BBP}.
For the following theorems, we define
\begin{equation}
\label{Def:g(t)}
	g(t) = \frac{(t^q + b)^r}{t^{p-1}} \quad \text{for} \ t > 0.
\end{equation}

\begin{theorem}
\label{Thm:b>0(I)}
Assume \eqref{Assumption} and let $r > 0$ and $b > 0$ in \eqref{BBP}.
Assume $qr-p+1 > 0$ and put
\begin{align*}
	t_0 = \(\frac{b(p-1)}{qr-p+1}\)^{\frac{1}{q}}, \quad \la_0 = \| U_p \|_q^{p-1} g(t_0)
\end{align*}
where $g$ is in \eqref{Def:g(t)}.
Then
\begin{enumerate}
\item[(i)] If $0 < \la < \la_0$, there exists no solution in $C^2(I) \cap L^q(I)$ for \eqref{BBP}. 
\item[(ii)] If $\la = \la_0$, there exists a unique solution in $C^2(I) \cap L^q(I)$ for \eqref{BBP}. 
\item[(iii)] If $\la > \la_0$, there exist exactly two solutions $u_{1,\la}$ and $u_{2,\la}$ in $C^2(I) \cap L^q(I)$ for \eqref{BBP}. 
Moreover, as $\la \to +\infty$, 
\begin{align*}
	&u_{1,\la}(x) = b^{\frac{r}{p-1}} \la^{-\frac{1}{p-1}} \(1 + \frac{r}{p-1} b^{\frac{qr-p+1}{p-1}} \la^{-\frac{q}{p-1}} \| U_p \|_q^q(1 + o(1)) \) U_p(x), \\
	&u_{2,\la}(x) = \( m_{p,q} \la^{\frac{1}{qr-p+1}} - \frac{brm_{p,q}^{1-q}}{qr-p+1} \la^{\frac{1-q}{qr-p+1}} (1 + o(1)) \) \frac{U_p(x)}{\| U_p \|_q}
\end{align*}
for any $x \in I$, where $m_{p,q} = \| U_p \|_q^{\frac{1-p}{qr-p+1}}$.
\end{enumerate}
\end{theorem}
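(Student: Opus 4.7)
My plan is to convert the nonlocal problem into a single scalar equation by exploiting the uniqueness of the profile $U_p$. If $u$ solves \eqref{BBP} and we set $C = (\| u\|_q^q + b)^r$, then $u'' = (\la/C) u^p$, so the rescaling $v = (\la/C)^{1/(p-1)} u$ satisfies $v'' = v^p$ with the same blow-up boundary condition and positivity. By uniqueness of $U_p$, we must have $v = U_p$, so every solution has the form $u = \alpha U_p$ for some $\alpha > 0$. Writing $s = \| u \|_q = \alpha \| U_p \|_q$, the consistency relation $\alpha = (C/\la)^{1/(p-1)}$ reduces to the scalar equation
\[
\la = \| U_p \|_q^{p-1} g(s), \qquad s > 0,
\]
with $g$ as in \eqref{Def:g(t)}. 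The correspondence $s \mapsto (s/\| U_p \|_q) U_p$ is a bijection between positive roots of this equation and solutions of \eqref{BBP} in $C^2(I) \cap L^q(I)$.

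Next I would study $g$. A direct differentiation gives
\[
g'(t) = \frac{(t^q + b)^{r-1} \bigl[ (qr-p+1) t^q - b(p-1) \bigr]}{t^p},
\]
which under the hypothesis $qr - p + 1 > 0$ has a unique zero at $t_0 = (b(p-1)/(qr-p+1))^{1/q}$. Since $g(t) \to +\infty$ both as $t \to 0^+$ (because $b>0$) and as $t \to +\infty$ (because $qr-p+1>0$), the point $t_0$ is a strict global minimum of $g$. Hence $\la = \| U_p \|_q^{p-1} g(s)$ has no positive root for $\la < \la_0$, exactly one root $s = t_0$ for $\la = \la_0$, and exactly two roots $s_1(\la) < t_0 < s_2(\la)$ for $\la > \la_0$. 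Via the bijection above, this yields (i), (ii), and the existence and exact multiplicity assertion in (iii).

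For the asymptotic formulas in (iii) I would perform two separate perturbative expansions as $\la \to \infty$. Since $g(s_1) \to \infty$ while $s_1 < t_0$, monotonicity forces $s_1 \to 0^+$; on that branch $(s^q + b)^r = b^r + r b^{r-1} s^q + o(s^q)$, so to leading order $\la \sim \| U_p \|_q^{p-1} b^r s^{-(p-1)}$, which gives $s_1 \sim \| U_p \|_q b^{r/(p-1)} \la^{-1/(p-1)}$; matching the $s^q$ term at the next order produces precisely the correction $\frac{r}{p-1} \| U_p \|_q^q b^{(qr-p+1)/(p-1)} \la^{-q/(p-1)}$ that appears in the stated formula for $u_{1,\la}$. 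Symmetrically, $s_2 \to +\infty$, and the expansion $(s^q + b)^r = s^{qr}(1 + rb\, s^{-q} + o(s^{-q}))$ gives $\la \sim \| U_p \|_q^{p-1} s^{qr-p+1}$, hence $s_2 \sim m_{p,q} \la^{1/(qr-p+1)}$, and the first correction is $-\frac{br m_{p,q}^{1-q}}{qr-p+1} \la^{(1-q)/(qr-p+1)}$. Dividing by $\| U_p \|_q$ and multiplying by $U_p(x)$ reproduces the formulas for $u_{1,\la}$ and $u_{2,\la}$.

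The conceptual content is entirely in the scaling reduction and the elementary shape analysis of $g$; the step I expect to require the most care is the bookkeeping in the two perturbative expansions, in particular verifying that the higher-order remainders actually collapse into the $(1+o(1))$ factors displayed in (iii), which comes down to a careful implicit-function-style argument near the two roots of $\la = \| U_p \|_q^{p-1} g(s)$.
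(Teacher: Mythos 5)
Your proposal is correct and follows essentially the same route as the paper: the rescaling $v=(\la/C)^{1/(p-1)}u$ combined with uniqueness of $U_p$ is exactly the content of the paper's Lemma \ref{Lemma:M}, the shape analysis of $g$ (unique critical point $t_0$, $g\to+\infty$ at both ends) is identical, and the two perturbative expansions for $s_1\to 0^+$ and $s_2\to+\infty$ match the paper's ansatz-and-compare computation, including both correction terms.
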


\begin{theorem}
\label{Thm:b>0(II)}
Assume \eqref{Assumption} and let $r > 0$ and $b > 0$ in \eqref{BBP}.
Assume $qr-p+1 \le 0$.
Then
\begin{enumerate}
\item[(i)] If $qr-p+1<0$, there exists a unique solution in $C^2(I) \cap L^q(I)$ of \eqref{BBP} for any $\la > 0$. 
\item[(ii)] If $qr-p+1=0$, there exists a unique solution in $C^2(I) \cap L^q(I)$ of \eqref{BBP} for any $\la > \| U_p \|_q^{p-1}$
and no solution for $\la \le \| U_p \|_q^{p-1}$.
\end{enumerate}
\end{theorem}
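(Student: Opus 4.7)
The plan is to reduce the nonlocal problem \eqref{BBP} to a one-variable algebraic equation in the unknown $t = \|u\|_{L^q(I)}$, exactly as must have been done in the proofs of Theorem \ref{Thm:b=0} and Theorem \ref{Thm:b>0(I)}. Indeed, if $u$ is a solution with $\|u\|_q = t$, then $u'' = \frac{\lambda}{(t^q+b)^r} u^p$ on $I$ with $u \to +\infty$ at $\partial I$, and by the uniqueness statement for \eqref{Eq:U_p} and rescaling, the only such $u$ is $u = \bigl(\frac{(t^q+b)^r}{\lambda}\bigr)^{1/(p-1)} U_p$. Imposing the consistency condition $\|u\|_q = t$ and using the definition of $g$ in \eqref{Def:g(t)} yields the scalar equation
\begin{equation*}
    \lambda = \|U_p\|_q^{p-1}\, g(t), \qquad t > 0,
\end{equation*}
and the map $t \mapsto u_t := \bigl(\frac{(t^q+b)^r}{\lambda}\bigr)^{1/(p-1)} U_p$ gives a bijection between positive roots $t$ of this equation and solutions of \eqref{BBP}. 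So everything reduces to counting positive roots of $g(t) = \lambda/\|U_p\|_q^{p-1}$.

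Next I would do a standard monotonicity analysis of $g$. Logarithmic differentiation gives
\begin{equation*}
    \frac{g'(t)}{g(t)} = \frac{qr\,t^{q-1}}{t^q+b} - \frac{p-1}{t} = \frac{(qr-p+1)\,t^q - (p-1)b}{t(t^q+b)}.
\end{equation*}
Under the standing assumption $qr-p+1 \le 0$ and $b > 0$, the numerator is strictly negative for every $t > 0$, so $g$ is strictly decreasing on $(0,+\infty)$. Since $b > 0$, we have $g(t) \to +\infty$ as $t \to 0^+$. As $t \to +\infty$, $g(t) = t^{qr-p+1}(1 + b\,t^{-q})^r$, which tends to $0$ when $qr-p+1 < 0$ and to $1$ when $qr-p+1 = 0$.

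From this the two cases follow immediately. In case (i), $g$ is a continuous strictly decreasing bijection from $(0,+\infty)$ onto $(0,+\infty)$, so for any $\lambda > 0$ the equation $g(t) = \lambda/\|U_p\|_q^{p-1}$ has exactly one positive root, hence \eqref{BBP} has exactly one solution in $C^2(I) \cap L^q(I)$. In case (ii), $g$ is a continuous strictly decreasing bijection from $(0,+\infty)$ onto $(1,+\infty)$, so the equation has a (unique) positive root iff $\lambda/\|U_p\|_q^{p-1} > 1$, i.e.\ $\lambda > \|U_p\|_q^{p-1}$, and no root otherwise.

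No single step looks like a serious obstacle: the bijection between solutions of \eqref{BBP} and positive roots of $g(t) = \lambda/\|U_p\|_q^{p-1}$ is the same rescaling trick that drives Theorems \ref{Thm:b=0} and \ref{Thm:b>0(I)}, and the monotonicity/limit analysis of $g$ is elementary. The only point that needs care is the asymptotic $g(t) \to 1$ in the borderline case $qr - p + 1 = 0$, which is what pins down the exact threshold $\lambda = \|U_p\|_q^{p-1}$ in part (ii).
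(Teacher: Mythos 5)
Your proposal is correct and follows essentially the same route as the paper: the rescaling bijection you rederive is exactly the paper's Lemma \ref{Lemma:M}, and the paper likewise concludes by noting that under $qr-p+1\le 0$ and $b>0$ the function $g$ is strictly decreasing on $(0,+\infty)$ with $g(t)\to+\infty$ as $t\to 0^+$ and $g(t)\to 0$ (resp.\ $1$) as $t\to+\infty$ when $qr-p+1<0$ (resp.\ $=0$). Your explicit logarithmic differentiation and the asymptotic $g(t)=t^{qr-p+1}(1+bt^{-q})^r$ just make precise the monotonicity and limit claims the paper states without computation.
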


As a special case of Theorem \ref{Thm:b>0(II)}, we have the following.

\begin{theorem}
\label{Thm:b>0(III)}
Let $p>1$, $b > 0$, and  $q = 1 < \frac{p-1}{2} = r$.
Then for any $\la > 0$, there exists a unique solution $u_{\la} \in C^2(I) \cap L^q(I)$ of \eqref{BBP} of the form
\[
	u_{\la}(x) = \frac{\| U_p\|_1 + \sqrt{\| U_p\|_1^2 + 4b \la^{\frac{2}{p-1}}}}{2} \la^{-\frac{2}{p-1}} U_p(x).
\] 
\end{theorem}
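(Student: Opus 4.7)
The plan is to use the ansatz $u = c U_p$ for an unknown positive constant $c$, and reduce the nonlocal problem \eqref{BBP} to an algebraic equation for $c$. Since $\|u\|_{L^1(I)} = c\|U_p\|_1$ and $U_p$ satisfies $U_p'' = U_p^p$ by \eqref{Eq:U_p}, substituting into \eqref{BBP} (with $q=1$ and $r=(p-1)/2$) gives
\[
(c\,\|U_p\|_1 + b)^{(p-1)/2}\, c\, U_p^p(x) = \la\, c^p\, U_p^p(x), \qquad x \in I.
\]
Cancelling the positive factor $c\,U_p^p$ and then raising both sides to the power $2/(p-1)$ — which is legitimate because both sides are positive, so no extraneous roots appear — produces the quadratic
\[
\la^{2/(p-1)}\, c^2 \,-\, \|U_p\|_1\, c \,-\, b \,=\, 0.
\]
Since the constant term $-b$ is strictly negative, this quadratic has exactly one positive root, namely
\[
c \,=\, \frac{\|U_p\|_1 + \sqrt{\|U_p\|_1^2 + 4 b\, \la^{2/(p-1)}}}{2\, \la^{2/(p-1)}},
\]
which is precisely the coefficient appearing in the stated formula. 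This yields existence and pins down the unique $c > 0$ for which $c U_p$ is a solution.

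To complete the argument I must verify that \emph{every} solution of \eqref{BBP} is of this form. Given any solution $u \in C^2(I) \cap L^1(I)$, set $\alpha = \|u\|_{L^1(I)}$; then $u$ solves the local boundary blow-up problem $u'' = \mu u^p$ in $I$, $u \to +\infty$ on $\pd I$, with $\mu = \la/(\alpha+b)^{(p-1)/2} > 0$. Rescaling $u = \mu^{-1/(p-1)} w$ converts this to $w'' = w^p$, $w > 0$ in $I$, $w \to +\infty$ on $\pd I$, whose unique solution is $U_p$. Hence $u$ is a positive constant multiple of $U_p$, and the quadratic above identifies the constant uniquely. Alternatively, since $qr - p + 1 = (p-1)/2 - (p-1) = -(p-1)/2 < 0$, existence and uniqueness follow at once from Theorem \ref{Thm:b>0(II)}(i), so only the explicit form needs to be checked.

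There is no real obstacle here: the computation is essentially mechanical once one recognizes that the nonlocal coefficient is a scalar that can be absorbed into the coupling constant and the problem thereby reduced to the local model \eqref{Eq:U_p}. The only step that merits attention is the passage from $(c\|U_p\|_1 + b)^{(p-1)/2} = \la c^{p-1}$ to the quadratic, together with the selection of the positive root; both are automatic from the positivity of the quantities involved.
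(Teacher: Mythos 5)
Your proof is correct and follows essentially the same route as the paper: the paper invokes Lemma \ref{Lemma:M} to reduce \eqref{BBP} to the scalar equation $g(t)=\la\|U_p\|_1^{1-p}$ in $t=\|u\|_1$ and solves the resulting quadratic, while you write $u=cU_p$ and solve the equivalent quadratic in $c=t/\|U_p\|_1$; your closing reduction of an arbitrary solution to a multiple of $U_p$ is exactly the argument used to prove Lemma \ref{Lemma:M}. The computation, the passage to the quadratic via the $2/(p-1)$ power, and the selection of the unique positive root all match the paper.
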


\vspace{1em}
The problem \eqref{BBP} is a one-dimensional toy model of much more general boundary blow up problem
with a nonlocal term
\begin{equation}
\label{BBP_f}
	\begin{cases}
	&M(\| u \|_{L^q(\Omega)}) \Delta u = \la f(u) \quad \text{in} \ \Omega, \\
	&u > 0 \quad \text{in} \ \Omega, \\
	&u(x) \to +\infty \quad \text{as} \ d(x) := {\rm dist }(x, \pd\Omega) \to 0,
	\end{cases}
\end{equation}
where $\Omega$ is a bounded domain in $\re^N$, $N \ge 1$, $f$ is a continuous nonlinearity, $\la > 0$, $q > 0$,
and $M:[0,\infty) \to \re_+$ is a continuous function.
$u$ is called a solution of \eqref{BBP_f} if $u \in C^2(\Omega) \cap L^q(\Omega)$ and $u$ satisfies the equation for $x \in \Omega$.
See some discussion for $f(u) = u^p$ in \S 6.
As far as the authors know, the problem \eqref{BBP_f}, or even \eqref{BBP_M}, \eqref{BBP}, have not been studied so far.
Therefore in this paper, we start to study \eqref{BBP} and obtain the precise information of the solution set 
according to the bifurcation parameter $\la$.

\section{Exact $L^q$-norm of $U_p$}

In this section, first we compute the precise value of the $L^q$-norm of $U_p$, which is the unique solution of \eqref{Eq:U_p},
when $p$ and $q$ satisfies \eqref{Assumption}.

\begin{lemma}
\label{Lemma:U_p}
For a given $p > 1$, let $U_p$ be the unique solution of the problem \eqref{Eq:U_p}.
Then $U_p \in L^q(I)$ if and only if $0 < q < \frac{p-1}{2}$.
Also we obtain
\begin{align}
\label{mu_p}
	&\mu_p := \min_{x \in I} U_p(x) = U_p(0) = \( \sqrt{\frac{p+1}{2}} L_p \)^{\frac{2}{p-1}}, \\ 
\label{L^qU_p}
	&\| U_p \|_{L_q(I)}^q = \sqrt{\frac{2}{p+1}}  \mu_p^{\frac{2q-p+1}{2}}  B\(\frac{p-2q-1}{2(p+1)},\frac{1}{2}\),
\end{align}
where $L_p$ is defined in \eqref{Def:L_p} and $B(x,y)$ denotes the Beta function.
\end{lemma}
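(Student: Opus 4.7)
The plan is to exploit evenness of $U_p$ together with a first-integral (energy) identity for the ODE $U_p'' = U_p^p$, and then reduce everything to a single one-variable integral, which we recognize as a Beta function via a power substitution.

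First, since $U_p$ is even and convex (because $U_p'' = U_p^p > 0$), we have $U_p'(0) = 0$ and $U_p$ attains its minimum $\mu_p := U_p(0)$ at the origin, with $U_p' > 0$ on $(0,1)$. Multiplying $U_p'' = U_p^p$ by $U_p'$ and integrating from $0$ to $x$ gives the energy identity
\begin{equation*}
\tfrac{1}{2}\bigl(U_p'(x)\bigr)^2 = \tfrac{1}{p+1}\bigl(U_p(x)^{p+1} - \mu_p^{p+1}\bigr), \qquad x \in [0,1).
\end{equation*}
Taking square roots (with positive sign on $(0,1)$), separating variables, and integrating from $0$ to $1$ while using the blow-up condition $U_p(x) \to +\infty$ as $x \to 1^-$, I obtain
\begin{equation*}
1 = \sqrt{\tfrac{p+1}{2}} \int_{\mu_p}^{\infty} \frac{dU}{\sqrt{U^{p+1} - \mu_p^{p+1}}}.
\end{equation*}
The substitution $U = \mu_p t$ factors out $\mu_p^{(p-1)/2}$ and reduces the right-hand side to $\sqrt{\tfrac{p+1}{2}}\,\mu_p^{(1-p)/2} L_p$. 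Solving for $\mu_p$ yields \eqref{mu_p}.

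For the $L^q$ norm, I use evenness and change variables $x \mapsto U = U_p(x)$, valid since $U_p$ is strictly monotone on $(0,1)$:
\begin{equation*}
\|U_p\|_{L^q(I)}^q = 2\int_0^1 U_p(x)^q\,dx = \sqrt{\tfrac{p+1}{2}}\cdot 2 \int_{\mu_p}^{\infty} \frac{U^q\,dU}{\sqrt{U^{p+1} - \mu_p^{p+1}}}.
\end{equation*}
The substitution $U = \mu_p t$ pulls out the factor $\mu_p^{(2q-p+1)/2}$, leaving the integral $\int_1^\infty t^q (t^{p+1}-1)^{-1/2} dt$. Its integrand is $\sim (p+1)^{-1/2}(t-1)^{-1/2}$ as $t\to 1^+$ (always integrable) and $\sim t^{q-(p+1)/2}$ as $t\to\infty$, which is integrable if and only if $q < (p-1)/2$. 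This proves the iff statement.

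Finally, to identify the integral as a Beta function, I apply the substitution $s = t^{-(p+1)}$, so that $t = s^{-1/(p+1)}$, $dt = -\tfrac{1}{p+1} s^{-(p+2)/(p+1)}\,ds$, and $\sqrt{t^{p+1}-1} = s^{-1/2}\sqrt{1-s}$. Tracking the powers carefully gives
\begin{equation*}
\int_1^\infty \frac{t^q\,dt}{\sqrt{t^{p+1}-1}} = \frac{1}{p+1}\int_0^1 s^{\frac{p-2q-1}{2(p+1)} - 1}(1-s)^{-1/2}\,ds = \frac{1}{p+1} B\!\left(\tfrac{p-2q-1}{2(p+1)}, \tfrac{1}{2}\right).
\end{equation*}
Combining the constants $2\sqrt{(p+1)/2}/(p+1) = \sqrt{2/(p+1)}$ yields \eqref{L^qU_p}. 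The only mildly delicate step is the bookkeeping of exponents in the $s$-substitution; once the Beta parameter $(p-2q-1)/(2(p+1))$ emerges correctly, the rest is assembly.
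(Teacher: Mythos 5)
Your proposal is correct and follows essentially the same route as the paper: the energy identity $\tfrac12 (U_p')^2 = \tfrac{1}{p+1}(U_p^{p+1}-\mu_p^{p+1})$, the time-map identity $1=\sqrt{\tfrac{p+1}{2}}\,\mu_p^{(1-p)/2}L_p$ to get \eqref{mu_p}, the change of variables $x\mapsto U_p(x)$ followed by $U=\mu_p t$ for the $L^q$ norm, and the substitution $s=t^{-(p+1)}$ to recognize the Beta function. The only (welcome) addition is that you make the integrability criterion $q<\tfrac{p-1}{2}$ explicit via the asymptotics $t^{q-(p+1)/2}$ at infinity, which the paper leaves implicit.
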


\begin{proof}
Following the paper by Shibata \cite{Shibata(JMAA)}, we use a time-map method to compute $\| U_p \|_{L^q(I)}$.
In \cite{Shibata(JMAA)}, the unique solution $W_p$ of the problem
\begin{equation*}
	\begin{cases}
	-W_p''(x) = W_p^p(x), \quad x \in I = (-1,1), \\
	W_p(x) > 0, \quad x \in I = (-1,1), \\
	W_p(\pm 1) = 0
	\end{cases}
\end{equation*}
is consdered and its maximum value and the $L^p$-norm is computed.
 
Multiply the equation by $U_p'(x)$, we have
\[
	(U_p''(x)-U_p^p(x))U_p'(x) = 0.
\]
This implies
\begin{align*}
	\left\{\frac{1}{2}(U_p'(x))^{2}-\frac{1}{p+1}U_p^{p+1}(x)\right\}' = 0,
\end{align*}
thus 
\begin{align*}
	\frac{1}{2}(U_p'(x))^{2}-\frac{1}{p+1}U_p^{p+1}(x) = \text{constant} = 0-\frac{1}{p+1} \mu_p^{p+1}.
\end{align*}
Therefore, we have
\begin{equation}
\label{Uprime}
	U_p'(x) = \sqrt{\frac{2}{p+1}(U_p^{p+1}(x)-\mu_p^{p+1})}.
\end{equation}
By \eqref{Uprime} and the change of variables $s = U_p(x)$ and $s= \mu_p t$, we compute
\begin{align*}
	1 = \int_{0}^{1}1dx &= \int_{0}^{1}\frac{U_p'(x)dx}{\sqrt{\frac{2}{p+1}(U_p^{p+1}(x)-\mu_p^{p+1})}} \\
	&=\sqrt{\frac{p+1}{2}}\int_{\mu_p}^{\infty}\frac{ds}{\sqrt{(s^{p+1}-\mu^{p+1})}} \\
	&=\sqrt{\frac{p+1}{2}}\int_{1}^{\infty} \frac{\mu_p dt}{\sqrt{\mu_p^{p+1}(t^{p+1}-1)}} \\
	&=\sqrt{\frac{p+1}{2}}\mu_p^{\frac{1-p}{2}} \int_{1}^{\infty}\frac{dt}{\sqrt{t^{p+1}-1}}.
\end{align*}
From this, we obtain
\[
	\mu_p = U_p(0) = \( \sqrt{\frac{p+1}{2}} L_p \)^{\frac{2}{p-1}}.
\]
Next, by the formula $\mu_p$, we see
\begin{align*}
	\| U_p \|_{L_q(I)}^{q} &= 2\int_0^1 |U_p(x)|^q dx \\
	&=2\int_{0}^{1}U_p^{q}(x)\frac{U_p'(x)dx}{\sqrt{\frac{2}{p+1}(U_p^{p+1}(x)-\mu_p^{p+1})}}\\
	&=\sqrt{2(p+1)}\int_{\mu_p}^{\infty}\frac{s^q ds}{\sqrt{(s^{p+1}-\mu_p^{p+1})}} \quad (s=U_p(x))\\
	&=\sqrt{2(p+1)}\int_{1}^{\infty} \frac{\mu^{q+1}t^{q} dt}{\sqrt{\mu_p^{p+1}(t^{p+1}-1)}} \quad \left(t=\frac{s}{\mu_p}\right)\\
	&=\sqrt{2(p+1)}\mu_p^{\frac{2q-p+1}{2}}\int_{1}^{\infty}\frac{t^q dt}{\sqrt{t^{p+1}-1}}.
\end{align*}
Since
\begin{align*}
	\int_{1}^{\infty}\frac{t^q dt}{(t^{p+1}-1)^{\frac{1}{2}}} &=\int_0^1 \frac{s^{\frac{-p-2q-3}{2(p+1)}}}{\sqrt{1-s}}ds \quad \(s=\frac{1}{t^{p+1}} \)\\
	&=\frac{1}{p+1}B \(\frac{p-2q-1}{2(p+1)},\frac{1}{2} \),
\end{align*}
we obtain the formula of $\| U_p \|_{L^q(I)}$.
\end{proof}

\begin{remark}
Since $L_p$ is uniformly bounded with respect to $p$, we see
\begin{align*}
	\log \mu_p &=\frac{2}{p-1}\sqrt{\frac{p+1}{2}}+\frac{2}{p-1}\log L_p \to 0 \quad (p \to \infty).
\end{align*}
This implies $\lim_{p \to \infty} \mu_p = 1$.
\end{remark}

\begin{remark}
By the homogeneity of the nonlinearity $f(s) = s^p$ and the uniqueness of $U_p$, 
it is obvious that for any $\la > 0$, $w_{\la}(x) = \la^{-\frac{1}{p-1}} U_p(x)$ is the unique solution of the problem
\begin{equation}
\label{Eq:w}
	\begin{cases}
	w''(x) = \la w^p(x), \quad x \in I = (-1,1), \\
	w(x) > 0, \quad x \in I, \\
	w(\pm 1) = +\infty.
	\end{cases}
\end{equation}
\end{remark}

Next lemma is an analogue of Theorem 2 in \cite{ACM} and is important throughout of the paper. 

\begin{lemma}
\label{Lemma:M}
Let $M:[0,+\infty) \to \re_+$ be continuous in \eqref{BBP_M}.
Then for a given $\la > 0$, the problem \eqref{BBP_M} has the same number of positive solutions as that of the positive solutions 
(with respect to $t >0$) of
\begin{equation}
\label{Eq:M}
	\frac{M(t)}{t^{p-1}} = \la \| U_p \|_q^{1-p}
\end{equation}
where $U_p$ is the unique solution of \eqref{Eq:U_p}.
Moreover, any solution $u_{\la}$ of \eqref{BBP_M} must be of the form 
\begin{equation}
\label{u_form}
	u_{\la}(x) = t_{\la} \frac{U_p(x)}{\| U_p \|_q}
\end{equation}
where $t_{\la}>0$ is a solution of \eqref{Eq:M}.
\end{lemma}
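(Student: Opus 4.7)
The proof plan rests on the scale invariance of the nonlinearity $f(s)=s^p$ together with the uniqueness noted in the remark after Lemma \ref{Lemma:U_p}. The idea is that for any solution $u$ of \eqref{BBP_M}, the quantity $M(\|u\|_q)$ is merely a positive constant, so $u$ actually solves a rescaled problem of type \eqref{Eq:w}, which forces $u$ to be a rescaling of $U_p$. The algebraic condition \eqref{Eq:M} then drops out from the self-consistency requirement that the $L^q$-norm of this rescaling matches the constant used to produce it.

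More concretely, assume $u$ solves \eqref{BBP_M} and set $A := M(\|u\|_q) > 0$. Then $u''(x) = (\la/A)\, u^p(x)$ on $I$ with $u \to +\infty$ at $\pd I$, so by the remark after Lemma \ref{Lemma:U_p} applied with parameter $\la/A$,
\[
u(x) = \( A/\la \)^{1/(p-1)} U_p(x).
\]
Writing $t := \|u\|_q = (A/\la)^{1/(p-1)} \|U_p\|_q$ and solving for $A$ gives $A = \la\, t^{p-1}\|U_p\|_q^{1-p}$; combining this with $A = M(t)$ is precisely \eqref{Eq:M}, and substituting back yields the form \eqref{u_form}. Conversely, given any positive root $t$ of \eqref{Eq:M}, I would set $u(x) := t\, U_p(x)/\|U_p\|_q$ and check directly that $\|u\|_q = t$ and
\[
M(\|u\|_q)\, u''(x) = M(t)\,\frac{t}{\|U_p\|_q}\, U_p^p(x) = \la \( \frac{t}{\|U_p\|_q} \)^p U_p^p(x) = \la\, u^p(x),
\]
so $u$ is a solution of \eqref{BBP_M}.

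The correspondence $u \longleftrightarrow t := \|u\|_q$ thus sends solutions of \eqref{BBP_M} to positive solutions of \eqref{Eq:M} and vice versa, and it is manifestly a bijection: two solutions of \eqref{BBP_M} with the same $L^q$-norm must coincide by the explicit formula \eqref{u_form}, while distinct values of $t$ clearly give distinct functions since $\|u\|_q = t$. There is no real obstacle in this argument; the only point requiring care is to keep track of the two different roles played by $\la$ (the parameter of \eqref{BBP_M} versus the parameter of the rescaled equation $u'' = (\la/A)u^p$), which is exactly what the renormalization by $A = M(\|u\|_q)$ is doing.
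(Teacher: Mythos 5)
Your proposal is correct and follows essentially the same route as the paper: you normalize away the constant $M(\|u\|_q)$ to reduce to the uniquely solvable problem for $U_p$, extract \eqref{Eq:M} from the self-consistency of the $L^q$-norm, and verify the converse direction by direct substitution, which is exactly the paper's argument (the paper phrases the correspondence as two cardinality inequalities rather than an explicit bijection, but the content is identical).
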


\begin{proof}
Let $u \in C^2(I) \cap L^q(I)$ be any solution of \eqref{BBP_M} and put $v = \gamma u$, 
where 
\[
	\gamma = M(\| u \|_q)^{-\frac{1}{p-1}} \la^{\frac{1}{p-1}}.
\]
Then
\begin{align*}
	v''(x) &= \gamma u''(x) \overset{\eqref{BBP_M}}{=} \gamma \frac{\la}{M(\| u \|_q)} u^p(x) \\
	&= \gamma \frac{\la}{M(\| u \|_q)} \(\frac{v(x)}{\gamma}\)^p \\
	&= \(\gamma^{1-p} \frac{\la}{M(\| u \|_q)} \) v^p(x) \\
	&= v^p(x).
\end{align*}
Thus $v$ is a solution of 
\[
	\begin{cases}
	v''(x) = v^p(x), \quad x \in I = (-1,1), \\
	v(x) > 0, \quad x \in I, \\
	v(\pm 1) = +\infty.
	\end{cases}
\]
By the uniqueness, $v \equiv U_p$.
This leads to 
\begin{equation}
\label{u_form2}
	u = \gamma^{-1} U_p = M(\| u \|_q)^{\frac{1}{p-1}} \la^{-\frac{1}{p-1}} U_p.
\end{equation}
Now, we put $t = \| u \|_q > 0$. Then by taking a $L^q$-norm of the both sides of \eqref{u_form2}, we have
\[
	t = \| u \|_q = M(t)^{\frac{1}{p-1}} \la^{-\frac{1}{p-1}} \| U_p \|_q,
\]
which is equivalent to \eqref{Eq:M}.
This shows that
\[
	\sharp \{ u \in C^2(I) \cap L^q(I): \text{solutions of \eqref{BBP_M}} \} \le \sharp \{ t > 0: \text{solutions of \eqref{Eq:M}} \}
\]
where $\sharp A$ denotes the cardinality of the set $A$.

On the other hand, let $t > 0$ be any solution of \eqref{Eq:M} and put 
\[
	u(x) = t \frac{U_p(x)}{\| U_p \|_q} \in C^2(I) \cap L^q(I).
\]
Then we have $u(x) > 0$, $u(\pm 1) = +\infty$, $t = \| u \|_q$ and
\begin{align*}
	M(\| u \|_q) u''(x) &= M(t) \frac{t}{\| U_p \|_q} U_p''(x) \overset{\eqref{Eq:U_p}}{=} M(t) \frac{t}{\| U_p \|_q} U_p^p(x) \\
	&= M(t) \frac{t}{\| U_p \|_q} \(\frac{\| U_p \|_q}{t} u(x) \)^p \\
	&= M(t) \frac{\| U_p \|_q^{p-1}}{t^{p-1}} u^p(x) \\
	&= \la u^p(x).
\end{align*}
This shows that
\[
	\sharp \{ t > 0: \text{solutions of \eqref{Eq:M}} \} \le \sharp \{ u \in C^2(I) \cap L^q(I): \text{solutions of \eqref{BBP_M}} \}.
\]
\end{proof}

\section{Proof of Theorem \ref{Thm:b=0}.}

In this section, we prove Theorem \ref{Thm:b=0}.

\begin{proof}
Let $qr+1-p \ne 0$.
By virtue of Lemma \ref{Lemma:M}, we only need to prove that \eqref{Eq:M} has the unique solution for any $\la > 0$.
Since $b=0$ in \eqref{BBP}, $M(t) = t^{qr}$ and the equation \eqref{Eq:M} reads
\begin{equation}
\label{Eq:b=0}
	t^{qr-p+1} = \la \| U_p \|_q^{1-p}.
\end{equation}
The map $t \mapsto t^{qr-p+1}$ is strictly monotone increasing (resp. decreasing) for $t >0$ 
if $qr-p+1 >0$ (resp. if $qr-p+1 <0$) and  maps $(0, +\infty)$ onto $(0, +\infty)$.
Therefore, the equation \eqref{Eq:b=0} admits the unique solution
\[
	t_{\la} = \la^{\frac{1}{qr-p+1}} \| U_p \|_q^{\frac{1-p}{qr-p+1}}
\]
for any $\la >0$.
By \eqref{u_form}, this corresponds to the unique solution
\[
	u_{\la}(x) = t_{\la} \frac{U_p(x)}{\| U_p \|_q} = \la^{\frac{1}{qr-p+1}} \| U_p \|_q^{\frac{-qr}{qr-p+1}} U_p(x)
\]
of \eqref{BBP}.
Inserting the formula \eqref{L^qU_p} in $\| U_p \|_q$, we obtain \eqref{Formula:b=0}.
This proves (i).

If $qr-p+1 = 0$, \eqref{Eq:b=0} is satisfied if and only if $\la = \| U_p \|_q^{p-1} = \| U_p \|_q^{qr}$.
Thus in this case $u_{\la} = U_p$ follows. This proves (ii).
\end{proof}

\section{Proof of Theorem \ref{Thm:b>0(I)}.}

In this section, we prove Theorem \ref{Thm:b>0(I)}.

\begin{proof}
According to Lemma \ref{Lemma:M}, the number of solutions of \eqref{BBP} is the same as the number of solutions
\begin{equation}
\label{Eq:g(t)}
	g(t) = \frac{(t^q + b)^r}{t^{p-1}} = \la \| U_p \|_q^{1-p}.
\end{equation}
Since
\[
	g'(t) = \frac{(t^q+b)^{r-1}}{t^p} \left\{ (qr-p+1)t^q - b(p-1) \right\}, 
\]
and $qr-p+1 > 0$,the equation $g'(t) = 0$ admits the unique solution 
\[
	t_0 = \( \frac{b(p-1)}{qr-p+1} \)^{1/q}
\]
and $g'(t) < 0$ if $0<t<t_0$, $g'(t) > 0$ if $t>t_0$ and $g(t_0) = \min_{t \in \re_+} g(t)$. 
Also $\lim_{t \to +0} g(t) = +\infty$ since $b > 0$, and $\lim_{t + \infty} g(t) = +\infty$ since $qr>p-1$.
Thus the equation (in $t$) \eqref{Eq:g(t)} admits 
\begin{enumerate}
\item[(i)] no solution if $\la \| U_p \|_q^{1-p} < g(t_0)$.
\item[(ii)] exactly one solution if $\la \| U_p \|_q^{1-p} = g(t_0)$.
\item[(iii)] exactly two solutions if $\la \| U_p \|_q^{1-p} > g(t_0)$.
\end{enumerate}
Thus putting $\la_0 = \| U_p \|_q^{p-1} g(t_0)$ and recalling Lemma \ref{Lemma:M},
we conclude the former part of Theorem \ref{Thm:b>0(I)}. 

Now, we prove the asymptotic formulae in the case (iii) and $\la >>1$.
By a simple consideration using the graph of $g(t)$, 
we see that two solutions $0 < t_1 < t_2$ of \eqref{Eq:g(t)} satisfy $t_1 < t_0 < t_2$ and
\[
	t_1 \to +0 \quad \text{and} \quad t_2 \to +\infty
\]
as $\la \to +\infty$.
Since
\[
	\la \| U_p \|_q^{1-p} = g(t_2) = \frac{(t_2+b)^r}{t_2^{p-1}} \sim \frac{t_2^{qr}}{t_2^{p-1}}
\]
as $\la \to +\infty$, $t_2$ can be expressed as
\[
	t_2 = \la^{\frac{1}{qr-p+1}} \| U_p \|_q^{\frac{1-p}{qr-p+1}} + R
\]
where $R = o (\la^{\frac{1}{qr-p+1}})$ as $\la \to \infty$.
In the following, we put
\[
	m_{p,q} = \| U_p \|_q^{\frac{1-p}{qr-p+1}}
\]
for simplicity. Then we write 
\begin{equation}
\label{t_2asymp}
	t_2 = m_{p,q} \la^{\frac{1}{qr-p+1}} + R.
\end{equation}
We insert this expression into $g(t_2) = \la \| U_p \|_q^{1-p}$, which is equivalent to
\begin{equation}
\label{Eq:t_2}
	(t_2^q + b)^r = t_2^{p-1} \la \| U_p \|_q^{1-p}.
\end{equation}
Then Taylor expansion implies
\begin{align*}
	&\text{(LHS) of \eqref{Eq:t_2}} = t_2^{qr} \( 1 + \frac{b}{t_2^q} \)^r
	= t_2^{qr} \( 1 + r \frac{b}{t_2^q} (1 + o(1)) \) \\
	&\overset{\eqref{t_2asymp}}{=}  \( m_{p,q} \la^{\frac{1}{qr-p+1}} + R \)^{qr} \( 1 + \frac{rb}{\(m_{p,q} \la^{\frac{1}{qr-p+1}} + R\)^q} (1 + o(1)) \) \\
	&=  \( m_{p,q} \la^{\frac{1}{qr-p+1}} \)^{qr} \(1 + \frac{R}{m_{p,q} \la^{\frac{1}{qr-p+1}}} \)^{qr} \( 1 + \frac{rb}{\(m_{p,q} \la^{\frac{1}{qr-p+1}} \)^q} (1 + o(1)) \) \\
	&=  \( m_{p,q} \la^{\frac{1}{qr-p+1}} \)^{qr} \(1 + \frac{qrR}{m_{p,q} \la^{\frac{1}{qr-p+1}}} (1 + o(1) \) \( 1 + \frac{rb}{\(m_{p,q} \la^{\frac{1}{qr-p+1}} \)^q} (1 + o(1)) \)  \\
	&=  \( m_{p,q} \la^{\frac{1}{qr-p+1}} \)^{qr} \(1 + \( \frac{qrR}{m_{p,q} \la^{\frac{1}{qr-p+1}}} + \frac{rb}{\(m_{p,q} \la^{\frac{1}{qr-p+1}} \)^q} \) (1 + o(1)) \) 
\end{align*}
as $\la \to \infty$.
On the other hand, 
\begin{align*}
	&\text{(RHS) of \eqref{Eq:t_2}} = t_2^{p-1} \la \| U_p \|_q^{1-p} 
	\overset{\eqref{t_2asymp}}{=} \la \| U_p \|_q^{1-p} \(m_{p,q} \la^{\frac{1}{qr-p+1}} + R \)^{p-1} \\
	&= \la \| U_p \|_q^{1-p} m_{p,q}^{p-1} \la^{\frac{p-1}{qr-p+1}} \(1 + (p-1) \frac{R}{m_{p,q} \la^{\frac{1}{qr-p+1}}} \)(1 + o(1)) \\
	&= \( m_{p,q} \la^{\frac{1}{qr-p+1}} \)^{qr} \(1 + (p-1) \frac{R}{m_{p,q} \la^{\frac{1}{qr-p+1}}} \)(1 + o(1))
\end{align*}
as $\la \to \infty$.
We compare these two equations and obtain
\[
	\frac{qrR}{m_{p,q} \la^{\frac{1}{qr-p+1}}} + \frac{br}{(m_{p,q} \la^{\frac{1}{qr-p+1}})^q} = (p-1) \frac{R}{m_{p,q} \la^{\frac{1}{qr-p+1}}} (1 + o(1)).
\]
From this, we obtain
\[
	R = -\frac{brm_{p,q}^{1-q}}{qr-p+1} \la^{\frac{1-q}{qr-p+1}} (1 + o(1))
\]
as $\la \to \infty$.
Inserting this in \eqref{t_2asymp}, we see
\[
	t_2 = m_{p,q} \la^{\frac{1}{qr-p+1}} - \frac{brm_{p,q}^{1-q}}{qr-p+1} \la^{\frac{1-q}{qr-p+1}} (1 + o(1)).
\]
Then $u_{2,\la} = t_2 \frac{U_p}{\| U_p \|_q}$ by \eqref{u_form},
we obtain the asymptotic formula for $u_{2,\la}(x)$.

Next, we prove the asymptotic formula for $u_{1,\la}$ when $\la >> 1$.
In this case, $t_1 \to 0$ as $\la \to \infty$ and $qr > p-1$, thus
\[
	\la \| U_p \|_q^{1-p} = g(t_1) = \frac{(t_2^q+b)^r}{t_1^{p-1}} = \frac{b^r}{t_1^{p-1}}(1 + o(1)).
\]
From this, we have
\[
	t_1 = b^{\frac{r}{p-1}} \la^{-\frac{1}{p-1}} \| U_p \|_q (1 + o(1))
\]
and we can write
\begin{equation}
\label{t_1asymp}
	t_1 = b^{\frac{r}{p-1}} \la^{-\frac{1}{p-1}} \| U_p \|_q (1 + \eta), \quad \eta = o(\la^{-\frac{1}{p-1}}).
\end{equation}
As before, we insert \eqref{t_1asymp} into
\begin{equation}
\label{Eq:t_1}
	(t_1^q + b)^r = t_1^{p-1} \la \| U_p \|_q^{1-p}.
\end{equation}
Then Taylor expansion implies
\begin{align*}
	&\text{(LHS) of \eqref{Eq:t_1}} = (t_1^q + b)^r = b^r \( 1 + \frac{r}{b} t_1^q + o(t_1^q) \). \\
	&\text{(RHS) of \eqref{Eq:t_1}} = t_1^{p-1} \la \| U_p \|_q^{1-p} \\ 
	&\overset{\eqref{t_1asymp}}{=} \la \| U_p \|_q^{1-p} \(b^{\frac{r}{p-1}} \la^{-\frac{1}{p-1}} \| U_p \|_q (1 + \eta) \)^{p-1} \\
	&= b^r (1 + \eta)^{p-1} = b^r \( 1 + (p-1) \eta + o(\eta) \). 
\end{align*}
Comparing these equations, we have
\begin{align*}
	\eta &= \frac{r}{b(p-1)} t_1^q (1 + o(1)) \\
	&\overset{\eqref{t_1asymp}}{=} \frac{r}{b(p-1)} \(b^{\frac{r}{p-1}} \la^{-\frac{1}{p-1}} \| U_p \|_q \)^q (1 + o(1)) \\
	&= \frac{r}{p-1} b^{\frac{qr-p+1}{p-1}} \la^{-\frac{q}{p-1}} \| U_p \|^q_q (1 + o(1))
\end{align*}
as $\la \to \infty$.
Returning to \eqref{t_1asymp} with this, we see
\[
	t_1 = b^{\frac{r}{p-1}} \la^{-\frac{1}{p-1}} \| U_p \|_q \(1 + \frac{r}{p-1} b^{\frac{qr-p+1}{p-1}} \la^{-\frac{q}{p-1}} \| U_p \|^q_q (1 + o(1))\).
\]
Since $u_{1,\la} = t_1 \frac{U_p}{\| U_p \|_q}$, we have the asymptotic formula for $u_{1,\la}$.
\end{proof}

\section{Proof of Theorem \ref{Thm:b>0(II)} and Theorem \ref{Thm:b>0(III)}.}

In this section, first we prove Theorem \ref{Thm:b>0(II)}.

\begin{proof}
Since $qr-p+1 \le 0$ and $b > 0$, $g(t)$ in \eqref{Def:g(t)} is strictly decreasing on $(0, +\infty)$
and
\[
	\lim_{t \to +0} g(t) = +\infty, \quad
	\lim_{t + \infty} g(t) = 
	\begin{cases}
	0, \quad qr-p+1 < 0, \\
	1, \quad qr-p+1 = 0.
	\end{cases}
\]
Thus the equation \eqref{Eq:g(t)} has the unique solution $t_{\la}$ for any $\la >0$ when $qr-p+1 <0$,
and for any $\la$ such that $\la \| U_p \|_1^{1-p }> 1$ when $qr-p+1 = 0$.
By Lemma \ref{Lemma:M} \eqref{u_form}, $u_{\la} = t_{\la} \frac{U_p}{\| U_p \|_q}$ is the unique solution of \eqref{BBP}.
This proves Theorem \ref{Thm:b>0(II)}.
\end{proof}

In some special cases, we can obtain the exact value of $t_{\la}$ in Theorem \ref{Thm:b>0(II)}.
This is the content of Theorem \ref{Thm:b>0(III)}, which we prove here.

\begin{proof}
In this case, since $q=1 < \frac{p-1}{2} = r$, the equation \eqref{Eq:g(t)} reduces to
\[
	\frac{t+b}{t^2} = \la^{\frac{2}{p-1}} \| U_p \|_1^{-2}.
\]
This is the quadratic equation in $t > 0$ and its positive solution is given by
\begin{align*}
	t_{\la} &= \frac{1 + \sqrt{1 + 4b \la^{\frac{2}{p-1}} \| U_p \|_1^{-2}}}{2 \la^{\frac{2}{p-1}} \| U_p \|_1^{-2}} \\
	&= \| U_p \|_1 \frac{\| U_p\|_1 + \sqrt{\| U_p\|_1^2 + 4b \la^{\frac{2}{p-1}}}}{2} \la^{-\frac{2}{p-1}}
\end{align*}
Thus by Lemma \ref{Lemma:M} \eqref{u_form}, we obtain the unique solution $u_{\la} = t_{\la} \frac{U_p}{\| U_p \|_1}$ of \eqref{BBP}.
\end{proof}

\section{Higher dimensional case.}

In this section, we consider the higher dimensional analogue of the problem \eqref{BBP}, namely
\begin{equation}
\label{H_BBP_M}
	\begin{cases}
	&M(\| u \|_{L^q(\Omega)}) \Delta u = u^p \quad \text{in} \ \Omega, \\
	&u > 0 \quad \text{in} \ \Omega, \\
	&u(x) \to +\infty \quad \text{as} \ d(x) := {\rm dist }(x, \pd\Omega) \to 0,
	\end{cases}
\end{equation}
where $\Omega$ is a bounded domain in $\re^N$, $N \ge 1$, $p > 1$, $q > 0$,
and $M:[0,\infty) \to \re_+$ is a continuous function.
We call $u$ a solution of \eqref{H_BBP_M} if $u \in C^2(\Omega) \cap L^q(\Omega)$ and $u$ satisfies \eqref{H_BBP_M} for $x \in \Omega$.

As in the 1-D case, the important fact is the existence and uniqueness of solutions of the model problem
\begin{equation}
\label{H_u_p}
	\begin{cases}
	&\Delta u = u^p \quad \text{in} \ \Omega, \\
	&u > 0 \quad \text{in} \ \Omega, \\
	&u(x) \to +\infty \quad \text{as} \ d(x) \to 0.
	\end{cases}
\end{equation}
It is well-known that there always exists a solution of \eqref{H_u_p} when $p >1$, 
if $\Omega$ satisfies some regularity assumption (exterior cone condition is enough) \cite{Keller} \cite{Osserman}.
A necessary and sufficient condition involving capacity for the existence of solutions of \eqref{H_u_p} is given in \cite{Labutin}.
Also the solution is unique and satisfies the estimate
\begin{equation}
\label{boundary_estimate}
	C_1 d(x)^{-\frac{2}{p-1}} \le u(x) \le C_2 d(x)^{-\frac{2}{p-1}}
\end{equation}
for some $0 < C_1 \le C_2$, if $\Omega$ is Lipschitz \cite{Bandle-Marcus(JAM)}, \cite{Bandle-Marcus(AIHP)}, \cite{LM}.
If $p \in (1, \frac{N}{N-2})$ when $N \ge 3$, or $p \in (1, +\infty)$ if $N = 1,2$,
no smoothness assumption is needed for the existence of solutions to \eqref{H_u_p} 
and the solution is unique if $\pd\Omega = \pd\ol{\Omega}$ is satisfied, see \cite{MV(AIHP)}, \cite{MV(JEE)}, \cite{Veron}, \cite{Kim}.
For any $p > 1$, \eqref{H_u_p} admits at most one solution if $\pd\Omega$ is represented as a graph of a continuous function locally \cite{MV(AIHP)}.

In the following, we assume that $\Omega$ is sufficiently smooth, say $C^2$.
In this case, the unique solution $u_p$ of the problem \eqref{H_u_p} is in $L^q(\Omega)$ for $q < \frac{p-1}{2}$.
Indeed, let $\eps > 0$ small and put  $\Omega_{\eps} = \{ x \in \Omega \ | \ d(x) > \eps \}$.
Then the estimate \eqref{boundary_estimate} implies
\begin{align*}
	\int_{\Omega} |u_p|^q dx &= \int_{\Omega_{\eps}} |u_p|^q dx +  \int_{\Omega \setminus \Omega_{\eps}} |u_p|^q dx \\
	 &\le C +  C \int_{\Omega \setminus \Omega_{\eps}} d(x)^{-\frac{2q}{p-1}}  dx \\
	 &\le C +  C' \int_0^\eps t^{-\frac{2q}{p-1}} dt < \infty
\end{align*}
if $\frac{2q}{p-1} < 1$, i.e., $q < \frac{p-1}{2}$.

As in Lemma \ref{Lemma:M}, we can prove the following.

\begin{theorem}
\label{Thm:H_M}
Let $\Omega \subset \re^N$ be a smooth bounded domain and
let $M:[0,+\infty) \to \re_+$ be continuous in \eqref{H_BBP_M}.
Then for a given $\la > 0$, the problem \eqref{H_BBP_M} has the same number of positive solutions as that of the positive solutions 
(with respect to $t >0$) of
\begin{equation}
\label{Eq:H_M}
	\frac{M(t)}{t^{p-1}} = \la \| u_p \|_q^{1-p}
\end{equation}
where $u_p$ is the unique solution of \eqref{H_u_p}.
Moreover, any solution $u_{\la}$ of \eqref{H_BBP_M} must be of the form 
\[
	u_{\la}(x) = t_{\la} \frac{u_p(x)}{\| u_p \|_q}
\]
where $t_{\la}>0$ is a solution of \eqref{Eq:H_M}.
\end{theorem}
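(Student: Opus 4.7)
The plan is to mirror the proof of Lemma~\ref{Lemma:M} essentially line by line, with the 1-D uniqueness of $U_p$ replaced by the uniqueness of the solution $u_p$ of the model problem \eqref{H_u_p} on smooth bounded domains, which is already available in the references cited immediately before the statement. The core idea is to show that every solution of \eqref{H_BBP_M} is a positive rescaling of $u_p$, and that the scaling factor is precisely $t/\|u_p\|_q$ where $t = \|u\|_q$ satisfies \eqref{Eq:H_M}.

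For the forward direction, I would take any solution $u \in C^2(\Omega) \cap L^q(\Omega)$ of \eqref{H_BBP_M} and define
\[
\gamma = M(\|u\|_q)^{-\frac{1}{p-1}} \lambda^{\frac{1}{p-1}} > 0, \qquad v = \gamma u.
\]
A direct substitution, exactly as in the 1-D computation, gives $\Delta v = v^p$ in $\Omega$, while $v>0$ and $v(x) \to +\infty$ as $d(x) \to 0$ are preserved by positive scaling. The uniqueness result for \eqref{H_u_p} on smooth domains then forces $v \equiv u_p$, hence
\[
u = \gamma^{-1} u_p = M(\|u\|_q)^{\frac{1}{p-1}} \lambda^{-\frac{1}{p-1}} u_p.
\]
Taking $L^q$-norms of both sides with $t := \|u\|_q > 0$ yields $t = M(t)^{\frac{1}{p-1}} \lambda^{-\frac{1}{p-1}} \|u_p\|_q$, which rearranges to \eqref{Eq:H_M} and simultaneously displays $u$ in the form $u = t\, u_p/\|u_p\|_q$.

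For the reverse direction, given any $t>0$ satisfying \eqref{Eq:H_M}, I would set $u(x) = t\, u_p(x)/\|u_p\|_q$ and verify directly, using $\Delta u_p = u_p^p$, $\|u\|_q = t$, and \eqref{Eq:H_M}, that $M(\|u\|_q) \Delta u = \lambda u^p$, together with the obvious positivity, regularity, integrability and boundary blow-up. Combining the two directions gives the claimed cardinality equality and the representation formula. The main obstacle is not really in the algebra, which is identical to the 1-D case, but in invoking the correct uniqueness statement for \eqref{H_u_p}; restricting to $C^2$ domains (as announced in the paragraph preceding the theorem) is precisely what makes the cited uniqueness results applicable, so no additional analytic work is needed.
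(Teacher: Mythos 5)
Your proposal is correct and is exactly the argument the paper intends: the authors explicitly omit the proof of Theorem \ref{Thm:H_M}, stating that it is completely similar to that of Lemma \ref{Lemma:M}, and your line-by-line adaptation (rescaling $v=\gamma u$, invoking uniqueness of $u_p$ on smooth domains, then taking $L^q$-norms, plus the converse verification) is precisely that adaptation.
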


The proof of Theorem \ref{Thm:H_M} is completely similar to that of Lemma \ref{Lemma:M}, so we omit it here.

\vskip 0.5cm

\noindent\textbf{Acknowledgement.} 
The second author (F.T.) was supported by JSPS Grant-in-Aid for Scientific Research (B), No. 23H01084, 
and was partly supported by Osaka Central University Advanced Mathematical Institute (MEXT Joint Usage/Research Center on Mathematics and Theoretical Physics).

\end{document}